\newtheorem{conj}{Conjecture} 
\newtheorem{thm}[conj]{Theorem}
\newtheorem{prop}[conj]{Proposition}
\newtheorem{lem}[conj]{Lemma}
\newcommand{\dom}{{\mathrm{dom}}}
\theoremstyle{definition}
\newtheorem{defi}[conj]{Definition} 
\newtheorem{rem}[conj]{Remark}
\title{Maximal Ideals in Commutative Rings and the Axiom of Choice}
\author{Alexei Entin}
\address{Raymond and Beverly Sackler School of Mathematical Sciences, Tel Aviv University, Tel Aviv 69978, Israel}
\email{aentin@tauex.tau.ac.il}
\begin{document}

\begin{abstract}
    It is well-known that within Zermelo-Fraenkel set theory (ZF), the Axiom of Choice (AC) implies the Maximal Ideal Theorem (MIT), namely that every nontrivial commutative ring has a maximal ideal. The converse implication MIT $\Rightarrow$ AC was first proved by Hodges, with subsequent proofs given by Banaschewski and Ern\'e.
    Here we give another derivation of MIT $\Rightarrow$ AC, aiming to make the exposition self-contained and accessible to non-experts with only introductory familiarity with commutative ring theory and naive set theory.
\end{abstract}

\maketitle

\section{Introduction}

In the present note we work in Zermelo-Fraenkel set theory (ZF) and all numbered assertions are theorems of ZF. By a ring we will always mean an associative ring with a unit and by a domain we mean a nontrivial commutative ring without zero-divisors. A foundational result in the theory of commutative rings (attributed to Krull) is the
\\ \\
{\bf Maximal Ideal Theorem (MIT).} Every nontrivial commutative ring $R$ has a maximal ideal.
\\ \\
This result is usually proved using Zorn's Lemma, which is equivalent to the
\\ \\
{\bf Axiom of Choice (AC).} Let $(A_i)_{i\in I}$ be a family of non-empty sets. There exists a function $f:I\to\bigcup_{i\in I}A_i$ such that $f(i)\in A_i$ for each $i\in I$. 
\\ \\
Thus we have AC $ \Rightarrow $ MIT. Scott \cite{Sco54} 
raised the question of whether 
the converse implication MIT $ 
\Rightarrow $ AC holds. It was 
answered affirmatively by 
Hodges \cite{Hod79}. 
Banaschewski \cite{Ban94} and 
Ern\'e \cite{Ern95} gave 
alternative proofs. All of the 
proofs are based on a similar 
principle - a combinatorial 
statement equivalent to AC is 
encoded in the existence of a maximal ideal in 
a suitable localized 
polynomial ring. Hodges's 
proof establishes (assuming 
MIT) that every tree has a 
branch \cite[Definition 9.10]{Jec03} (which implies AC). Banaschewski's proof establishes AC directly and Ern\'e's proof establishes the Teichm\"uller-Tuckey lemma \cite[p. 10]{Jec08} (which implies AC). The variant presented in this note establishes a weak form of Zorn's Lemma, which (as we will see below) is actually equivalent to the original Zorn's Lemma. The proof presented here was developed independently from \cite{Ban94, Ern95} (the author was initially only familiar with Hodges's work and learned about the work of Banaschewski and Ern\'e later). The material in the present note can be viewed as a special case of the results in \cite{Ern95} and is based on similar arguments. Nevertheless we hope that the presentation here will be more accessible and intuitive to non-experts and perhaps offer a new point of view that may eventually lead to novel results. The only background required from the reader is introductory familiarity with commutative rings, ideals, polynomial rings, localization, partial orders and Zorn's Lemma.

We now state a more precise version of MIT $ \Rightarrow $ AC that we will prove below (this statement is also proved in \cite{Ern95} and in slightly weaker forms in \cite{Hod79, Ban94}).
\begin{thm}\label{thm: main} Let $R$ be a domain and assume that for every set of variables $X$ and any multiplicative subset $S$ of the polynomial ring $R[X]$ with $0\not\in S$, the localization $S^{-1}R[X]$ has a maximal ideal. Then $\mathrm{AC}$ holds.\end{thm}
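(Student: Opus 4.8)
The plan is to deduce from the hypothesis a maximality principle equivalent to $\mathrm{AC}$ and then read off $\mathrm{AC}$ from it in the usual way. First I would fix an arbitrary family $(A_i)_{i\in I}$ of nonempty sets and observe that it suffices to produce a $\subseteq$-maximal element of the poset $P$ of \emph{partial choice functions} for the family, i.e.\ functions $p$ with $\dom(p)\subseteq I$ and $p(i)\in A_i$ for all $i\in\dom(p)$, ordered by extension. Indeed, $P$ has least element $\varnothing$, every chain in $P$ has its union as an upper bound in $P$, and a maximal element $p\in P$ must be total: if some $i_0\notin\dom(p)$, pick any $a_0\in A_{i_0}$ (a single nonempty set requires no choice) and extend $p$ to $p\cup\{(i_0,a_0)\}\in P$, contradicting maximality. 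Thus a maximal element of $P$ is a choice function for $(A_i)_{i\in I}$, and ``every poset of the form $P$ has a maximal element'' is the weak form of Zorn's Lemma we aim to establish; it follows from full Zorn trivially, and conversely, as it implies $\mathrm{AC}$, it implies full Zorn.

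Next I would encode the search for a maximal element of $P$ into a localized polynomial ring over the given domain $R$. Here the key mechanism is the standard correspondence: for a multiplicative subset $S$ of $R[X]$ with $0\notin S$, the maximal ideals of $S^{-1}R[X]$ correspond to the ideals of $R[X]$ that are \emph{maximal among all ideals disjoint from $S$}, and such ideals are automatically prime. So the hypothesis furnishes, for each such $S$, a prime $\mathfrak p\subseteq R[X]$ maximal among primes avoiding $S$. I would take $X=\{x_{(i,a)}:i\in I,\ a\in A_i\}$ (variables indexed by the disjoint union $\bigsqcup_i A_i$) and design $S$ so that, for any prime $\mathfrak p$ disjoint from $S$, the rule
\[ i\ \longmapsto\ \{\,a\in A_i:\ x_{(i,a)}\notin\mathfrak p\,\} \]
outputs, blockwise, a nonempty set, and so that the \emph{maximality} of $\mathfrak p$ among such primes forces this set to be a singleton for every $i$, i.e.\ the rule outputs a total choice function. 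The localization $S^{-1}R[X]$ is then of exactly the form to which our hypothesis applies, so MIT (in the restricted form assumed) gives the required maximal ideal and we are done once $S$ is chosen correctly.

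The whole difficulty is therefore concentrated in the choice of $S$, and this is the step I expect to be the main obstacle. On one hand $S$ must not contain $0$, so that the hypothesis is applicable; on the other hand localization can only \emph{force elements to become units} (hence to avoid $\mathfrak p$): thus ``at least one variable per block avoids $\mathfrak p$'' can be arranged --- e.g.\ by putting into $S$ the (nonzero, hence harmless) elements $1-\prod_{a\in F}(1-x_{(i,a)})$ for finite $F\subseteq A_i$ with $|F|\ge 2$, whose vanishing modulo $\mathfrak p$ would force all $x_{(i,a)}$, $a\in F$, into $\mathfrak p$ --- but one cannot directly impose the ``at most one per block'' conditions $x_{(i,a)}x_{(i,b)}\in\mathfrak p$, since localization does not put elements \emph{into} primes and a quotient is not available. ``At most one'' must instead be squeezed out of the maximality of $\mathfrak p$ together with further, carefully chosen generators of $S$, all while keeping $0\notin S$ (a naive combination of the obvious generators can multiply to $0$). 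Pinning down a multiplicative set that threads this needle is essentially the content of Ern\'e's argument, which I would follow, most plausibly building $S$ by a recursion adapted to the combinatorial structure of $(A_i)_{i\in I}$. Once such an $S$ is in hand, the hypothesis produces the maximal ideal, the displayed rule yields a total choice function, and since $(A_i)_{i\in I}$ was arbitrary, $\mathrm{AC}$ follows; the same argument phrased for an abstract poset $P$ as above proves the weak form of Zorn's Lemma, which recovers Zorn's Lemma in full.
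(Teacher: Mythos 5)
Your reduction of AC to the existence of a maximal partial choice function is fine and matches the paper's first step (the paper packages it as a ``Weak Zorn's Lemma'': every poset in which each \emph{compatible} subset --- one all of whose finite subsets have upper bounds --- has an upper bound, has a maximal element). But the proof has a genuine gap exactly where you say it does: the multiplicative set $S$ is never constructed, and constructing it is the entire mathematical content of the theorem. ``I would follow Ern\'e's argument, most plausibly building $S$ by a recursion adapted to the combinatorial structure'' is not a proof step; moreover the direction you are pushing in (forcing ``at least one variable per block'' out of $\mathfrak p$ via generators like $1-\prod_{a\in F}(1-x_{(i,a)})$, and hoping to squeeze ``at most one per block'' out of maximality) does not obviously close up: with infinitely many blocks you would need infinitely many such constraints to interact multiplicatively without producing $0$ or, worse, without the resulting $S$ meeting every ideal that you want to survive, and you give no argument for either.

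For comparison, the paper's construction avoids your ``one variable per block'' encoding entirely. It takes the variables $X$ to be the elements of the poset itself (e.g.\ the poset of partial choice functions), declares a polynomial $f$ to be \emph{dominated by} $x\in X$ if every monomial of $f$ contains some variable $y\leqslant x$, calls $f$ \emph{small} if it is dominated by some $x$ and \emph{big} otherwise, and takes $S$ to be the set of big polynomials. The two nontrivial verifications are (a) $S$ is multiplicative (split $f=f_1+f_2$ and $g=g_1+g_2$ into parts dominated and not dominated by a given $x$; then $f_2g_2\not\leqslant x$ and the rest is $\leqslant x$), and (b) if $P$ is an ideal maximal among those contained in the complement of $S$ (``maximal small''), then $P$ is generated by $Y=P\cap X$ and $Y$ is a \emph{maximal compatible} subset of $X$; an upper bound of $Y$ is then a maximal element of the poset. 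Step (b) needs a genuinely clever degree-shifting argument ($h=x^d f+g$ with $d>\deg g$ to prevent cancellation) to show $P=(Y)$. None of this, nor any workable substitute, appears in your sketch, so as written the proposal establishes only the easy outer reduction and leaves the core of the theorem unproved.
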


\section{Weak Zorn's Lemma}

Our proof does not establish AC directly, but instead establishes an equivalent statement which we call the \emph{Weak Zorn's Lemma}. Before stating it we review some basic definitions and (the original) Zorn's Lemma.

\begin{defi} Let $X$ be a partially-ordered set (poset). A subset $Y\subset X$ is a \emph{chain} if for any $x,y\in Y$ we have $x\leqslant y$ or $y\leqslant x$.\end{defi}
\begin{defi} Let $X$ be a poset, $Y\subset X$. An \emph{upper bound} on $Y$ is an element $x\in X$ such that $y\leqslant x$ for all $y\in Y$.\end{defi}
\begin{defi}\label{def: compatible} Let $X$ be a poset. A subset $Y\subset X$ is (upward) \emph{compatible} if any finite subset of $Y$ has an upper bound in $X$.\end{defi}
Note that every chain is compatible (since every finite subset has an element greater than the rest).

\begin{defi} Let $X$ be a poset. An element $x\in X$ is \emph{maximal} if there is no $y\in X$ such that $x<y$.\end{defi}
\noindent
{\bf Zorn's Lemma (ZL).} Let $X$ be a poset such that every chain $Y\subset X$ has an upper bound. Then $X$ has a maximal element.
\\ \\
{\bf Weak Zorn's Lemma (WZL).} Let $X$ be a poset such that every (upward) compatible subset $Y\subset X$ has an upper bound. Then $X$ has a maximal element.
\\ \\
Since every chain in a poset $X$ is compatible, we clearly have ZL $\Rightarrow$ WZL. In fact WZL is equivalent to ZL. We now show that WZL directly implies both ZL and AC.

\begin{prop} \label{prop: wzl}\begin{enumerate}\item[(i)]$\mathrm{WZL}\Rightarrow\mathrm{AC}$.\item[(ii)]$\mathrm{WZL}\Rightarrow\mathrm{ZL}$.\end{enumerate}\end{prop}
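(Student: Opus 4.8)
The plan is to prove the two implications in the order (ii) then (i), since a direct argument for (ii) is short and (i) can either be deduced from it or proved by the same method. For (ii), I would start from a poset $X$ in which every chain has an upper bound, and let $\mathcal{C}$ be the set of all chains of $X$, partially ordered by inclusion. First I would verify that $\mathcal{C}$ satisfies the hypothesis of WZL: if $\mathcal{Y}\subset\mathcal{C}$ is compatible, then applying Definition~\ref{def: compatible} to two-element subsets shows that any $C,C'\in\mathcal{Y}$ lie in a common chain of $X$, so any two members of $\bigcup\mathcal{Y}$ are comparable; hence $\bigcup\mathcal{Y}\in\mathcal{C}$ and it bounds $\mathcal{Y}$. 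Applying WZL to $\mathcal{C}$ yields a maximal chain $C_0$. Now I would invoke the hypothesis on $X$ to get an upper bound $m\in X$ of $C_0$: maximality of $C_0$ forces $m\in C_0$ (otherwise $C_0\cup\{m\}$ would be a strictly larger chain), and then $m$ must be a maximal element of $X$, since for any $y>m$ the set $C_0\cup\{y\}$ would again be a strictly larger chain.

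For (i), given a family $(A_i)_{i\in I}$ of non-empty sets, I would take $P$ to be the set of partial choice functions, i.e.\ functions $f$ with $\dom f\subset I$ and $f(i)\in A_i$ for all $i\in\dom f$, ordered by inclusion of graphs; note $P\neq\emptyset$ as it contains the empty function. The key point is that $P$ satisfies the WZL hypothesis: if $\mathcal{F}\subset P$ is compatible then any two $f,g\in\mathcal{F}$ admit a common extension in $P$ and hence agree on $\dom f\cap\dom g$, so $\bigcup\mathcal{F}$ is a well-defined element of $P$ bounding $\mathcal{F}$. WZL then produces a maximal $f_0\in P$, which must be total: if some $i\in I$ were not in $\dom f_0$, then choosing any $a\in A_i$ (possible since $A_i\neq\emptyset$) would give the strictly larger $f_0\cup\{(i,a)\}\in P$. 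Thus $f_0$ is a choice function. (Alternatively, once (ii) is known, (i) follows from the classical derivation of AC from ZL.)

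I do not expect a serious obstacle here; the one substantive point is the verification that a compatible family of chains, respectively of partial choice functions, has a union that still lies in the relevant poset — and this is exactly where the notion of compatibility (rather than mere chains) gets used, via its restriction to two-element subsets. The other thing to be careful about is that no appeal to choice sneaks into what must be a ZF argument: in (ii) the element $m$ comes from the hypothesis on the single poset $X$, not from a choice over a family, and in (i) the extension step only uses non-emptiness of one set $A_i$, so the whole proof is valid in ZF.
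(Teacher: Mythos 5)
Your proposal is correct and follows essentially the same route as the paper: for (i) the poset of partial choice functions under inclusion, and for (ii) the poset of chains under inclusion, in each case verifying the WZL hypothesis by reducing compatibility to its two-element instances so that the union stays in the poset. The only cosmetic difference is that in (ii) you first note the upper bound $m$ lies in the maximal chain before concluding it is maximal, whereas the paper adjoins both $x$ and $y$ to the chain at once; both variants are fine.
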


\begin{proof} {\bf (i).} Let $(A_i)_{i\in I}$ be a family of non-empty sets. Consider the set $X$ of \emph{partial choice functions} for the family, i.e. the set of functions $f$ with domain $\mathrm{dom}(f)\subset I$ and $f(i)\in A_i$ for all $i\in\mathrm{dom}(f)$. We order $X$ by inclusion, i.e. $f\leqslant g$ iff $\dom(f)\subset\dom(g)$ and $g|_{\dom(f)}=f$.

Let $Y\subset X$ be compatible. Then for any $f,g\in Y$ and $i\in\dom(f)\cap\dom(g)$ we have $f(i)=g(i)$. Consequently $\bigcup_{f\in Y}f$ is itself a partial choice function and is an upper bound on $Y$. Assuming WZL, there is a maximal element $f\in X$. If $\dom(f)\subsetneq I$ we can take $g=f\cup\{(i,a)\}$ for some $i\in I\setminus\dom(f),\,a\in A_i$ and then $f<g$, contradicting maximality. Therefore $\dom(f)=I$ and $f$ is a (full) choice function for $(A_i)_{i\in I}$. Thus AC holds.

{\bf (ii)} Let $X$ be a partial order where every chain has an upper bound. Let $C$ be the set of chains in $X$, partially ordered by inclusion. We want to apply WZL to $C$, so we check that its condition holds. If $D\subset C$ is compatible then for any $c_1,\ldots,c_n\in D$ we have that $\bigcup_{i=1}^nc_i\subset c\in C$ is a chain ($c$ is an upper bound on $\{c_1,\ldots,c_n\}$). Therefore $e=\bigcup_{c\in D}c$ is a chain (if $x,y\in e$ then $x\in c,\,y\in d$ for some $c,d\in D$ and thus $x,y\in c\cup d\subset u\in C$ are comparable, i.e. $x\leqslant y$ or $y\leqslant x$; here $u\in C$ is an upper bound for $\{c,d\}$). Therefore $e\in C$ is a chain and is therefore an upper bound on $D$. Assuming WZL, there must be a maximal element $c\in C$ (i.e. a maximal chain in $X$). By assumption $c$ has an upper bound $x$ in $X$. If $x$ is not maximal, say $x<y$ for some $y\in C$, then $c\cup\{x,y\}\supsetneq c$ is a chain, contradicting the maximality of $c$. Thus $X$ has a maximal element as required.

\end{proof}

We note that many important applications of Zorn's Lemma involve partial orders which satisfy the conditions of WZL, e.g. the standard proofs of MIT, the fact that every vector space has a basis and the Teichm\"uller-Tuckey lemma (but many other applications involve partial orders satisfying the condition of ZL but not of WZL). Thus WZL is a fairly natural statement, which could potentially be useful for deriving other equivalent forms of AC.

\section{Polynomial rings with partially ordered variables}

In light of Proposition \ref{prop: wzl}, Theorem \ref{thm: main} would follow at once from the following

\begin{prop}\label{prop: central} Let $R$ be a domain and $X$ a set of variables. Assume that for any multiplicative subset $S$ of the polynomial ring $R[X]$ the localization $S^{-1}R[X]$ has a maximal ideal. Then any partial order on $X$ satisfying the condition of WZL has a maximal element.\end{prop}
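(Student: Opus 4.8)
The plan is to use the localization hypothesis to produce a \emph{maximal} element of the family of \emph{compatible} subsets of $X$ (ordered by inclusion), and then to extract from it a maximal element of the poset $X$ itself.

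Consider first the order-theoretic half. Suppose $M\subseteq X$ is a maximal compatible subset. Since $X$ satisfies the condition of WZL, $M$ has an upper bound $b$. If $b\notin M$ then $M\cup\{b\}$ is still compatible --- a finite subset of it either lies in $M$, or has the form $F\cup\{b\}$ with $F\subseteq M$ and is then bounded by $b$ --- contradicting maximality of $M$; hence $b\in M$, so $b=\max M$. If some $c\in X$ had $c>b$, then $M\cup\{c\}$ would likewise be compatible (everything in $M$ is $\leqslant b<c$) and strictly larger than $M$, again impossible. So $b$ is a maximal element of $X$, and it suffices to construct a maximal compatible subset.

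For the ring-theoretic half, note that ``compatible'' is a property of finite character: $C\subseteq X$ is compatible iff each of its finite subsets is (i.e.\ has an upper bound in $X$). One may assume $R=K$ is a field (replacing any multiplicative $S$ by the one generated by $S\cup(R\setminus\{0\})$, the hypothesis gives that $T^{-1}K[X]$ has a maximal ideal for every multiplicative $T\subseteq K[X]$, $K=\operatorname{Frac}(R)$), and one may assume $X$ is not itself compatible (otherwise the condition of WZL already exhibits a maximal element of $X$). For $C\subseteq X$ set $\mathfrak p_C=(x:x\in C)\subseteq K[X]$; as $K[X]/\mathfrak p_C\cong K[X\setminus C]$ is a domain, $\mathfrak p_C$ is prime. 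For each finite $F\subseteq X$ with no upper bound in $X$ (say $F$ is \emph{bad}; note a bad set is nonempty) put $s_F=1-\prod_{x\in F}(1-x)\in K[X]$, a nonzero polynomial, and let $S$ be the multiplicative set generated by the $s_F$. Reducing modulo $\mathfrak p_C$ shows $s_F\in\mathfrak p_C$ precisely when $F\subseteq C$, so (as $\mathfrak p_C$ is prime) $\mathfrak p_C$ meets $S$ iff some bad $F$ lies in $C$, i.e.\ iff $C$ is not compatible; and $0\notin S$ since $K[X]$ is a domain.

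Now the hypothesis applied to $S^{-1}K[X]$ yields a maximal ideal, corresponding to an ideal $\mathfrak m\subseteq K[X]$ maximal among those disjoint from $S$; such an $\mathfrak m$ is automatically prime. Put $C_{\mathfrak m}=\{x\in X:x\in\mathfrak m\}$; then $\mathfrak p_{C_{\mathfrak m}}\subseteq\mathfrak m$ is disjoint from $S$, so $C_{\mathfrak m}$ is compatible. The crucial step --- and the one I expect to be the main obstacle --- is to show that $C_{\mathfrak m}$ is \emph{maximal} compatible. If one knew that $\mathfrak m$ is generated by the variables it contains, i.e.\ $\mathfrak m=\mathfrak p_{C_{\mathfrak m}}$, this would be immediate: for any $x_0\notin C_{\mathfrak m}$ with $C_{\mathfrak m}\cup\{x_0\}$ compatible, $\mathfrak p_{C_{\mathfrak m}\cup\{x_0\}}$ would be an ideal disjoint from $S$ strictly containing $\mathfrak m$, contradicting maximality. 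So the real difficulty is to rule out any ``extra'' element of $\mathfrak m$, say a nonzero $f_0\in\mathfrak m\cap K[X\setminus C_{\mathfrak m}]$; the natural attack is to adjoin to $\mathfrak m$ appropriate variables occurring in $f_0$ (those in a compatible set covering the monomials of $f_0$) to get a strictly larger ideal still disjoint from $S$, and this is where the precise choice of $S$ must be exploited --- possibly enlarging $S$ to $\{f\in K[X]:f\notin\mathfrak p_C\text{ for every compatible }C\}$, which is still multiplicative and avoids $0$. Granted that $C_{\mathfrak m}$ is a maximal compatible subset, the order-theoretic half completes the proof. (This is essentially the Teichm\"uller--Tukey lemma for the finite-character family of compatible subsets of $X$.)
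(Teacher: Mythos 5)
Your overall architecture --- use the localization hypothesis to produce a maximal compatible subset of $X$, then extract from it a maximal element of $X$ --- is the same as the paper's, and your order-theoretic half is correct. But the ring-theoretic half has a genuine gap, and with the multiplicative set $S$ you actually define (generated by the $s_F$ for bad finite $F$) the construction provably fails. Consider $\mathfrak m=(x-1:x\in X)$: since every bad set is nonempty, each $s_F=1-\prod_{x\in F}(1-x)$ evaluates to $1$ at the all-ones point, so $\mathfrak m$ is a maximal ideal of $K[X]$ disjoint from $S$ and hence is maximal among ideals disjoint from $S$; yet $\mathfrak m\cap X=\emptyset$, which is not a maximal compatible set whenever $X\neq\emptyset$ (singletons are always compatible). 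So ``maximal among ideals disjoint from $S$'' does not force $\mathfrak m$ to be generated by the variables it contains, and $C_{\mathfrak m}$ need not be maximal compatible. You correctly flag this as the main obstacle, but you do not overcome it, and no choice of maximal ideal is guaranteed to avoid the problem.

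Your suggested repair --- enlarging $S$ to $\{f:\ f\notin\mathfrak p_C\text{ for every compatible }C\}$ --- is in fact exactly the paper's set of \emph{big} polynomials: because a polynomial involves only finitely many variables, lying in $\mathfrak p_C$ for some compatible $C$ is the same as having every monomial contain a variable $\leqslant x$ for a single $x\in X$. (With this description, multiplicativity of $S$ is immediate from primality of the $\mathfrak p_C$, which is a slicker route than the paper's Lemma \ref{lem: mult}.) But the crucial missing step is then precisely the paper's Proposition \ref{prop: main}(i): if $P$ is maximal among ideals disjoint from this $S$, then $P$ is generated by $P\cap X$. The paper's proof of this is the real work of the argument: for $f\in P$ with monomial $m$, pick a variable $x$ of $m$ and, for each $g\in P$, consider $h=x^{d}f+g\in P$ with $d>\deg g$; smallness of $h$ forces $m$ and $g$ to be dominated by a common $y$, whence $(P,z)$ is still disjoint from $S$ for a suitable variable $z$ of $m$ and maximality yields $z\in P$. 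Without this (or an equivalent) argument, your proof is incomplete at exactly the point you predicted.
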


It remains to prove Proposition \ref{prop: central}, which will occupy the rest of this note. We fix an arbitrary poset $(X,\leqslant)$ and an arbitrary domain $R$. We view $X$ as a set of variables for the polynomial ring $R[X]$. We naturally view $X$ as a subset of $R[X]$. A \emph{monomial} in $R[X]$ is a product (possibly empty, equalling 1) of variables in $X$. We say that a monomial $m$ appears in $f\in R[X]$ if the coefficient of $m$ in $f$ is non-zero and we say that a variable $x$ appears in $m$ (or that $m$ contains $x$) if the exponent of $x$ in $m$ is positive.

\begin{defi} Let $x\in X$ be a variable and $f\in R[X]$ a polynomial. We say that $f$ is \emph{dominated by $x$} if every monomial appearing in $f$ contains a variable $y$ with $y\leqslant x$. We denote this by $f\leqslant x$.\end{defi}
We now make two observations (immediate from the definition), which will be used repeatedly.
\begin{enumerate}
\item[(i)] If $f,g\leqslant x$ and $p,q\in R[X]$ then $pf+qg\leqslant x$ (in other words the set of polynomials dominated by $x$ is an ideal).
\item[(ii)] If $f\leqslant x,\,g\not\leqslant x$ then $f+g\not\leqslant x$.
\end{enumerate}
\begin{defi} A polynomial $f\in R[X]$ is called \emph{small} if $f\leqslant x$ for some $x\in X$. It is called \emph{big} if no such $x$ exists.\end{defi}

\begin{lem}\label{lem: mult} The set $S$ of big polynomials in $R[X]$ is multiplicative.\end{lem}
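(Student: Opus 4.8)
Recall that a multiplicative set must contain $1$ and be closed under multiplication. The plan is to verify both requirements. First, $1\in S$: the constant polynomial $1$ has only the empty monomial appearing in it, and the empty monomial contains no variable at all, so it cannot contain a variable $y\leqslant x$ for any $x$; hence $1\not\leqslant x$ for every $x$, i.e. $1$ is big. (Here it is worth noting explicitly that $X$ could in principle be empty, but then $R[X]=R$ and the only monomial is $1$, which is still big; in any case the argument above is uniform.)

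The substantive part is closure under multiplication, and this is where I would spend the effort. I would prove the contrapositive: if $fg$ is small, then $f$ is small or $g$ is small. So suppose $fg\leqslant x$ for some $x\in X$, and suppose toward a contradiction that both $f\not\leqslant x$ and $g\not\leqslant x$. Since $f\not\leqslant x$, there is a monomial $m$ appearing in $f$ such that no variable $y\leqslant x$ divides $m$; likewise a monomial $n$ appearing in $g$ with no variable $y\leqslant x$ dividing $n$. I want to conclude that the product monomial $mn$ appears in $fg$ and still has no variable $\leqslant x$ dividing it, contradicting $fg\leqslant x$ via observation (ii) (or directly the definition of domination).

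The only real obstacle is the first of those two claims — that $mn$ genuinely appears in $fg$, i.e. that its coefficient is nonzero — because a priori cancellation among the cross-terms could kill it. To handle this I would pick $m$ and $n$ canonically: order monomials by some total order refining divisibility (for instance, fix a total order on $X$ and use the induced lexicographic or degree-lexicographic order on monomials, which exists since we are only comparing finitely many monomials — those actually appearing in $f$ and in $g$ — so no choice is needed), and take $m$ to be the $\leqslant$-largest monomial appearing in $f$ among those with no variable $\leqslant x$, and similarly $n$ the largest such in $g$. Then any other way of obtaining $mn$ as a product $m'n'$ with $m'$ appearing in $f$, $n'$ appearing in $g$ forces $m'\leqslant m$ and $n'\leqslant n$ with equality of the product; a short argument with the monomial order shows this forces $m'=m,n'=n$, so the coefficient of $mn$ in $fg$ is exactly $(\text{coeff of }m\text{ in }f)\cdot(\text{coeff of }n\text{ in }g)$, which is nonzero because $R$ is a domain. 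Finally, no variable $y\leqslant x$ divides $mn$: such a $y$ would divide $m$ or divide $n$, contradicting the choice of $m,n$. This contradicts $fg\leqslant x$, completing the argument. I expect the monomial-order bookkeeping to be the one place that needs care; everything else is immediate from observations (i)–(ii) and the fact that $R$ is a domain.
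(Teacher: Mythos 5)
Your proof is correct, and it takes a somewhat different route from the paper's. Both arguments reduce to the same core fact: the product of the ``non-dominated parts'' of $f$ and $g$ is nonzero and contributes a monomial with no variable $\leqslant x$. The paper gets this by writing $f=f_1+f_2$, $g=g_1+g_2$ (dominated and non-dominated monomials), observing $f_2,g_2\neq 0$, and invoking that $R[X]$ is a domain to conclude $f_2g_2\neq 0$; observations (i) and (ii) then finish in one line, with no monomial bookkeeping at all. You instead run a leading-monomial argument: choose the largest non-dominated monomials $m$ of $f$ and $n$ of $g$ with respect to a monomial order on the finitely many relevant variables, and show the coefficient of $mn$ in $fg$ is nonzero because $R$ is a domain. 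This is in effect an inlined proof of the fact that $R[X]$ is a domain, restricted to the monomials that matter, so your version is more self-contained but longer. One step you leave implicit and should spell out: when you argue that any factorization $mn=m'n'$ with $m'$ appearing in $f$ and $n'$ appearing in $g$ forces $m'\leqslant m$ and $n'\leqslant n$, you need first that $m'$ and $n'$ themselves contain no variable $\leqslant x$ (which holds because every variable of $m'$ and of $n'$ occurs in $m'n'=mn$, and $mn$ has none); only then does the maximality of $m$ and $n$ among the non-dominated monomials apply. With that sentence added, the argument is complete, and your remark that no choice is needed to order the finitely many variables is correct.
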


\begin{proof} Clearly $1\in S$. Let $f,g\in S$ be big and let $x\in X$ be a variable. Write $f=f_1+f_2,\,g=g_1+g_2$, where $f_1$ (resp. $g_1$) consists of the monomials of $f$ (resp. $g$) dominated by $x$, and $f_2$ (resp. $g_2$) consists of the monomials of $f$ (resp. $g$) not dominated by $x$. Since $f,g$ are big we must have $f_2,g_2\neq 0$ and therefore $f_2g_2\not\leqslant x$ (since the monomials appearing in $f_2,g_2$ do not contain variables $\leqslant x$). Since $fg=(f_1g_1+f_1g_2+f_2g_1)+f_2g_2$ and $f_1g_1+f_1g_2+f_2g_1\leqslant x$ (by observation (i) above), we have $fg\not\leqslant x$ (by observation (ii) above). This is true for any $x\in X$, so $fg\in S$ is big and $S$ is multiplicative.
\end{proof}

\begin{defi} An ideal $I\lhd R[X]$ is called \emph{small} if every $f\in I$ is small.\end{defi}

\begin{defi} A \emph{maximal small ideal} is a small ideal $P\lhd R[X]$ which is not properly contained in another small ideal.\end{defi}

\begin{lem}\label{lem: loc} Let $A$ be a commutative ring, $S\subset A$ a multiplicative subset.
\begin{enumerate}\item[(i)] Assume that the localization $S^{-1}A$ has a maximal ideal. Then the set of ideals $I\lhd A$ disjoint from $S$ contains a maximal element (with respect to inclusion). 
\item[(ii)] Any such maximal element is a prime ideal.\end{enumerate}\end{lem}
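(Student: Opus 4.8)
The plan is to exploit the standard order-theoretic correspondence between ideals of the localization $S^{-1}A$ and ideals of $A$ that are disjoint from $S$ (equivalently, ideals $I$ with $I\cap S=\varnothing$). Concretely, I would recall the localization map $\iota\colon A\to S^{-1}A$, $a\mapsto a/1$, and consider the two operations: extension $J\mapsto J^e = (S^{-1}A)\cdot\iota(J)$ sending an ideal of $A$ to an ideal of $S^{-1}A$, and contraction $\mathfrak{q}\mapsto \mathfrak{q}^c = \iota^{-1}(\mathfrak{q})$ sending an ideal of $S^{-1}A$ back to an ideal of $A$. Both maps are inclusion-preserving, so the main task is to show they restrict to mutually inverse bijections between the poset of ideals of $A$ disjoint from $S$ and the poset of all ideals of $S^{-1}A$, and in particular an order isomorphism.

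First I would verify that for any ideal $\mathfrak{q}\lhd S^{-1}A$, the contraction $\mathfrak{q}^c$ is disjoint from $S$: if $s\in \mathfrak{q}^c\cap S$ then $s/1\in\mathfrak{q}$, but $s/1$ is a unit in $S^{-1}A$ (its inverse is $1/s$, which lies in $S^{-1}A$ precisely because $s\in S$), forcing $\mathfrak{q}=S^{-1}A$; since we want a bijection with \emph{all} ideals I would at this stage restrict attention on the localization side to proper ideals, and note that $S^{-1}A$ being a ring with a maximal ideal is in particular nonzero (here $0\notin S$ guarantees $S^{-1}A\neq 0$, so the hypothesis ``$S^{-1}A$ has a maximal ideal'' is consistent and gives us a proper ideal to work with). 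Next I would check $(\mathfrak{q}^c)^e=\mathfrak{q}$ — this is the familiar fact that extension followed by contraction recovers $\mathfrak{q}$ because every element of $S^{-1}A$ has the form $a/s = (1/s)(a/1)$ with $1/s$ a unit — and that $(J^e)^c = \{a\in A : sa\in J \text{ for some } s\in S\}$, the \emph{saturation} of $J$ with respect to $S$, which equals $J$ itself whenever $J$ is disjoint from $S$ (if $sa\in J$ with $s\in S$ and $J\cap S=\varnothing$, one needs to conclude $a\in J$; this uses that $J$ is disjoint from $S$ together with... hmm, actually this is the subtle point, see below).

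The step I expect to be the genuine obstacle is precisely this last one: showing that an ideal $J$ disjoint from $S$ equals its own $S$-saturation is \emph{not} true for arbitrary $J$ — e.g. in $\mathbb{Z}$ with $S$ the powers of $2$, the ideal $J=(6)$ is disjoint from $S$ but $2\cdot 3\in J$ while $3\notin J$. So a naive ``bijection with all ideals disjoint from $S$'' is false. The correct statement, and what I would actually prove, is that contraction gives an order isomorphism between the proper ideals of $S^{-1}A$ and the \emph{$S$-saturated} ideals of $A$ disjoint from $S$; and then, crucially, that a maximal element among \emph{all} ideals of $A$ disjoint from $S$ is automatically $S$-saturated (its saturation is again an ideal disjoint from $S$ containing it, hence equal to it by maximality). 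Therefore I would run the argument as follows: take a maximal ideal $\mathfrak{m}$ of $S^{-1}A$ (it is proper by definition), contract it to get $J_0=\mathfrak{m}^c$, an ideal of $A$ disjoint from $S$. I claim $J_0$ is maximal among all ideals of $A$ disjoint from $S$: if $J_0\subsetneq J$ with $J\cap S=\varnothing$, then $J^e$ is an ideal of $S^{-1}A$ with $J^e\supseteq \mathfrak{m}$, and $J^e$ is proper (if $J^e = S^{-1}A$ then $1/1 = \sum (a_k/s_k)\iota(j_k)$ for $j_k\in J$, which after clearing denominators yields $s\cdot 1 = \sum t_k a_k j_k$ for some $s\in S$, placing an element of $S$ in $J$ — contradiction); by maximality of $\mathfrak{m}$ we get $J^e=\mathfrak{m}$, whence $J\subseteq J^{ec}=(\text{saturation of }J)$. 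This does not immediately give $J\subseteq J_0$ unless $J$ is saturated, but now I apply the inclusion $J\subseteq J^{ec}=\mathfrak{m}^c = J_0$, which \emph{does} hold because $J^e=\mathfrak{m}$; so in fact $J\subseteq J_0$, contradicting $J_0\subsetneq J$. Hence $J_0$ is the desired maximal element, completing the proof.
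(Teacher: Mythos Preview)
Your proof is correct and follows essentially the same approach as the paper: contract a maximal ideal $\mathfrak{m}$ of $S^{-1}A$ to obtain $J_0=\iota^{-1}(\mathfrak{m})$, and show that any ideal $J\supseteq J_0$ disjoint from $S$ must satisfy $J^e=\mathfrak{m}$ and hence $J\subseteq J^{ec}=J_0$. The extended discussion of saturation and the counterexample $J=(6)\subset\mathbb{Z}$ are accurate but ultimately unnecessary for the argument you actually run, which only uses the universally valid inclusion $J\subseteq J^{ec}$ rather than equality; the paper's version is just the contrapositive, showing directly that any $I\supsetneq J_0$ extends to all of $S^{-1}A$ and therefore meets $S$.
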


\begin{proof} {\bf(i).} Let $M\lhd S^{-1}A$ be a maximal ideal and consider the localization homomorphism $\mathrm{loc}:A\to S^{-1}A$ given by $\mathrm{loc}(a)=\frac a1$. Then $\mathrm{loc}^{-1}(M)\lhd A$ is maximal among the ideals of $A$ disjoint from $S$ (it is disjoint from $S$ because $\mathrm{loc}(s)$ is invertible in $S^{-1}A$ for any $s\in S$; if $\mathrm{loc}^{-1}(M)\subsetneq I\lhd A$ then $M\subsetneq S^{-1}I\lhd S^{-1}A$, so $1\in S^{-1}I$ and therefore $I\cap S\neq\emptyset$).

{\bf(ii)} Let $I\lhd A$ be maximal in the set of ideals disjoint from $S$, $a,b\in A\setminus I$. By maximality we have $(I,a)\cap S\neq\emptyset$, so there exists $s\in (I,a)\cap S$ and similarly $t\in(I,b)\cap S$. Now $st\in (I,a)(I,b)\subset (I,ab)$ and we must have $ab\not\in I$, otherwise $st\in I\cap S$, contradicting the assumption that $I\cap S=\emptyset$. Thus $I$ is prime. \end{proof}

\begin{lem}\label{lem: max small} Assume that every localization of $R[X]$ by a multiplicative subset $S$ with $0\not\in S$ has a maximal ideal. Then there exists a maximal small ideal $P\lhd R[X]$.\end{lem}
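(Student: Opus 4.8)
The plan is to apply the preceding lemma to $A=R[X]$ with $S$ the set of big polynomials. First I would invoke Lemma \ref{lem: mult} to know that $S$ is multiplicative, and check that $0\notin S$. Since no monomial appears in the zero polynomial, the requirement in the definition of $0\leqslant x$ is satisfied vacuously for every $x\in X$; hence $0$ is small whenever $X$ is non-empty, so $0\notin S$. (If $X=\emptyset$ then $R[X]=R$ and every element of $R[X]$ is big, so $S=R\ni 0$; but then the hypothesis of the lemma, applied to this very $S$, would say that the zero ring $S^{-1}R$ has a maximal ideal, which is absurd. So that case is vacuous and we may assume $X\neq\emptyset$.)

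Next, since the hypothesis guarantees that the localization $S^{-1}R[X]$ has a maximal ideal and we have just seen $0\notin S$, the preceding lemma yields an ideal $P\lhd R[X]$ that is maximal, with respect to inclusion, among all ideals of $R[X]$ that are disjoint from $S$.

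Finally I would identify ``disjoint from $S$'' with ``small'': an ideal $I\lhd R[X]$ has $I\cap S=\emptyset$ iff $I$ contains no big polynomial, and since every polynomial is either big or small, this holds iff every element of $I$ is small, i.e. iff $I$ is a small ideal. Therefore being maximal among ideals disjoint from $S$ is literally the same as being a maximal small ideal, and $P$ is the desired ideal. The argument is purely a bookkeeping reduction to the previous two lemmas; the only place calling for any attention is the verification $0\notin S$, i.e. ruling out the degenerate case $X=\emptyset$, which was dealt with above.
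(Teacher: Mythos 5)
Your proof is correct and follows exactly the paper's route: apply the preceding lemma to $A=R[X]$ with $S$ the set of big polynomials, using Lemma \ref{lem: mult} for multiplicativity. The extra details you supply --- checking $0\notin S$ (and disposing of the degenerate case $X=\emptyset$, where the hypothesis is vacuously false) and identifying ``disjoint from $S$'' with ``small'' --- are points the paper leaves implicit, and you handle them correctly.
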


\begin{proof} Apply the previous lemma to $A=R[X]$ and $S$ the set of big polynomials (which is multiplicative by Lemma \ref{lem: mult}).\end{proof}

\begin{lem} \label{lem: compatible} Let $Y\subset X$ be compatible (in the sense of Definition \ref{def: compatible}). Then the ideal $(Y)\lhd R[X]$ generated by $Y$ is small.\end{lem}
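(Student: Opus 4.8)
The plan is to prove directly that every $f\in(Y)$ is small. First I would unwind the definition of the generated ideal: any $f\in(Y)$ can be written as a finite sum $f=\sum_{i=1}^n p_iy_i$ with $y_1,\ldots,y_n\in Y$ and $p_1,\ldots,p_n\in R[X]$. (If $Y=\emptyset$ then $(Y)=(0)$ and $0\leqslant x$ vacuously for any $x\in X$, assuming $X\neq\emptyset$; more simply, the case $f=0$ is subsumed by the argument below once an upper bound is in hand, so I may as well assume the sum is nonempty.)

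The key point is that the finite subset $\{y_1,\ldots,y_n\}\subset Y$ has an upper bound $x\in X$ by compatibility of $Y$ — this is precisely where the hypothesis is used. I then claim each summand satisfies $p_iy_i\leqslant x$: every monomial appearing in $p_iy_i$ has the form $m\cdot y_i$ for some monomial $m$ of $p_i$, hence contains the variable $y_i$, and $y_i\leqslant x$ by the choice of $x$. Thus $p_iy_i\leqslant x$ for each $i$, and by observation (i) above (the polynomials dominated by $x$ form an ideal, in particular are closed under addition) we conclude $f=\sum_{i=1}^n p_iy_i\leqslant x$. Hence $f$ is small, and since $f\in(Y)$ was arbitrary, $(Y)$ is a small ideal.

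I do not anticipate a genuine obstacle: the lemma is essentially the observation that the finiteness built into "element of the ideal generated by $Y$" (a finite $R[X]$-linear combination) matches exactly the finiteness built into the definition of compatibility (every finite subset has an upper bound). The only points requiring a little care are the degenerate cases ($f=0$ or $Y=\emptyset$) and the discipline of invoking observation (i) rather than arguing monomial-by-monomial about $f$ directly — cancellation could a priori occur when the $p_iy_i$ are summed, but observation (i) sidesteps this entirely since it shows the set of polynomials dominated by $x$ is closed under arbitrary $R[X]$-linear combinations.
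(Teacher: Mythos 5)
Your proposal is correct and is essentially the paper's own argument: write $f=\sum_i p_iy_i$, take an upper bound $x$ of the finitely many $y_i$ by compatibility, note each $p_iy_i\leqslant x$, and conclude $f\leqslant x$ via observation (i). The extra care you take with the degenerate cases and with avoiding monomial cancellation is fine but not needed beyond what the paper already does.
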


\begin{proof} Let $g=\sum_{i=1}^ng_iy_i\in (Y),\,y_i\in Y,\,g_i\in R[X]$. Since $Y$ is compatible there exists $x\in X$ such that $y_i\leqslant x$ for $1\le i\le n$. Therefore $g\leqslant x$ is small for any $g\in (Y)$ and $(Y)$ is a small ideal.\end{proof}

\begin{prop} \label{prop: main} Let $P\lhd R[X]$ be a maximal small ideal and denote $Y=P\cap X$. Then
\begin{enumerate}\item[(i)] $P=(Y)$ is generated by $Y$.
\item[(ii)] $Y$ is compatible.
\item[(iii)] $Y$ is \emph{maximal compatible}: if $Y\subset Y'\subset X$ and $Y'$ is compatible, then $Y'=Y$.
\end{enumerate}
\end{prop}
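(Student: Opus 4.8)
The plan is to prove the three parts in the order (ii), (i), (iii), since compatibility of $Y$ feeds directly into the proof that $P=(Y)$.

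\medskip

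\textbf{Part (ii).} I would argue by contradiction. Suppose $Y=P\cap X$ is not compatible, so there exist $y_1,\dots,y_n\in Y$ with no common upper bound in $X$. The key observation is that the monomial $m=y_1\cdots y_n$ is then \emph{big}: for any $x\in X$, some $y_i\not\leqslant x$ (else $x$ would bound all the $y_i$), and since $y_i$ is the only variable appearing in it with... more precisely, a monomial $m'$ is dominated by $x$ iff it contains some variable $\leqslant x$, so $m=y_1\cdots y_n\leqslant x$ would force some $y_j\leqslant x$; as no such $x$ works for all choices, $m$ is big. But $m\in P$ (it is a product of elements of $Y\subset P$), contradicting that $P$ is a small ideal, since a small ideal contains only small polynomials and $m$ is big. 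Hence $Y$ is compatible.

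\medskip

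\textbf{Part (i).} Clearly $(Y)\subset P$. By Lemma~\ref{lem: compatible} together with Part (ii), $(Y)$ is a small ideal. Since $P$ is a \emph{maximal} small ideal and $(Y)\subset P$, to conclude $P=(Y)$ it suffices to find \emph{some} small ideal containing $P$ and argue it must be $P$ itself — but that is automatic. The real content is the reverse: I want to show $P\subset (Y)$. Here I expect the main obstacle. The idea: take $f\in P$; I must show $f\in(Y)$. Since $P$ is small, $f\leqslant x$ for some $x\in X$, so every monomial of $f$ contains a variable $\leqslant x$. This alone does not put $f$ in $(Y)$. Instead I would use maximality: consider, for a variable $z\in X$ appearing in some monomial of $f$ but with $z\notin Y$, whether the ideal $P+(z)$ is still small. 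If $P+(z)$ is small it strictly contains $P$ (as $z\notin P$, since $P\cap X=Y$), contradicting maximality; so $P+(z)$ is not small, meaning it contains a big polynomial. I would then leverage this — writing a big element of $P+(z)$ as $p+gz$ with $p\in P$ small — to derive information forcing the relevant monomials of $f$ to actually involve variables of $Y$. The delicate combinatorial bookkeeping of monomials (separating those divisible by variables in $Y$ from the rest, as in the proof of Lemma~\ref{lem: mult}) is where care is needed; I would isolate the part $f=f_Y+f'$ where $f_Y\in(Y)$ collects monomials containing a variable of $Y$, and show $f'=0$ by arguing that otherwise $f'$ (or a suitable multiple) witnesses that $P$ could be enlarged within the small ideals.

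\medskip

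\textbf{Part (iii).} Suppose $Y\subset Y'\subset X$ with $Y'$ compatible. By Lemma~\ref{lem: compatible}, $(Y')$ is a small ideal, and $P=(Y)\subset (Y')$ by Part (i). Maximality of $P$ among small ideals forces $(Y')=(Y)=P$. Then for $y'\in Y'$ we have $y'\in (Y')=P$, so $y'\in P\cap X=Y$; hence $Y'\subset Y$ and $Y'=Y$. This part is routine once (i) and (ii) are in hand. The overall structure is thus: establish compatibility of $Y$ directly from smallness of $P$ (using that a product of incompatible variables is big), then the harder equality $P=(Y)$ via a maximality-of-$P$ argument ruling out extra monomials, and finally deduce maximal compatibility formally.
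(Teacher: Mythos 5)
Your part (iii) is correct and matches the paper, but there are two real problems earlier.

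\textbf{Part (ii) is wrong as stated.} The product $m=y_1\cdots y_n$ is \emph{never} big when $n\geq 1$: by the definition of domination, a monomial is dominated by $x$ as soon as it contains \emph{some} variable $\leqslant x$, so $m$ contains $y_1$ and hence $m\leqslant y_1$, i.e.\ $m$ is small regardless of whether the $y_i$ have a common upper bound. You have conflated ``some $y_j\leqslant x$'' (which is what $m\leqslant x$ means) with ``all $y_j\leqslant x$'' (which is what an upper bound requires), so no contradiction arises. The correct witness is the \emph{sum} $y_1+\cdots+y_n$ (with the $y_i$ distinct, so that each monomial has coefficient $1\neq 0$): this polynomial lies in $P$, and it is dominated by $x$ precisely when \emph{every} $y_i$ is $\leqslant x$, i.e.\ precisely when $x$ is a common upper bound. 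Smallness of $P$ then yields compatibility of $Y$ directly; note this does not need part (i) at all.

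\textbf{Part (i) is only a sketch where the actual work is required.} You correctly reduce to showing that every monomial $m$ of $f\in P$ contains a variable of $Y$, and you correctly identify that maximality of $P$ must be brought to bear via the ideals $(P,z)$ for variables $z$ of $m$. But the step that makes this work is exactly the one you defer as ``delicate bookkeeping,'' and it is not routine: one must produce, for a \emph{single} variable $z$ of $m$, a dominating variable that works \emph{simultaneously} for $qz$ and for an arbitrary $g\in P$. The paper's device is to consider $h=x^df+g\in P$ with $x$ a variable of $m$ and $d>\deg g$; the degree condition forces the monomials of $g$ and the monomial $x^dm$ to survive uncancelled in $h$, so smallness of $h$ gives one $y$ with both $g\leqslant y$ and $m\leqslant y$, hence $qz+g\leqslant y$ for some variable $z$ of $m$. (Alternatively, assuming every $(P,z_i)$ contains a big element $g_i+q_iz_i$, one can multiply these together and invoke Lemma~\ref{lem: mult} to produce a big element of the form $g+Qz_1\cdots z_k$ with $g\in P$, and then contradict the fact that $g$ and $m$ admit a common dominating variable.) Without some such argument, your decomposition $f=f_Y+f'$ and the assertion ``$f'$ witnesses that $P$ could be enlarged'' do not constitute a proof; filling this gap is the heart of the proposition.
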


\begin{proof}
{\bf (i).} Clearly $P\supset (Y)$, so it is enough to show that $P\subset (Y)$. Let $f\in P$ and let $m$ be a monomial appearing in $f$. 
\\ \\{\bf Claim.} There exists $z\in Y$ which appears in $m$. \\ \\
Since $f\in P$ is small we have $m\neq 1$, so let $x$ be a variable appearing in $m$. Let $g\in P$ and consider $h=x^df+g\in P$, where $d>\deg g$. Since $h$ is small and the monomials appearing in $g$ and $x^dm$ also appear in $h$ because of our assumption on $d$ (they cannot cancel each other out), there exists $y\in X$ such that $h\leqslant y$ and therefore $m,g\leqslant y$ (note that since $x$ occurs in $m$ the condition $x^dm\leqslant y$ is equivalent to $m\leqslant y$). This implies that $g+qm$ is small for any $q\in R[X],\,g\in P$ and therefore $(P,m)$ is a small ideal. By the maximality of $P$ we must have $m\in P$. By Lemma \ref{lem: loc}(ii) the ideal $P$ is prime and therefore one of the variables appearing in $m$ lies in $P\cap X=Y$, establishing the claim.\\

Now since every monomial appearing in $f$ contains a variable from $Y$, we have $f\in (Y)$ and the proof of (i) is complete.

{\bf (ii).} Let $x_1,\ldots x_n\in Y$. We assume WLOG that they are distinct. Then $x_1+x_2+\ldots+x_n\in P$ is small and therefore there exists an upper bound $x\in X$ on $\{x_1,\ldots,x_n\}$. Hence $Y$ is compatible.

{\bf (iii)} Let $Y\subset Y'\subset X$ be compatible, $x\in Y'$. By Lemma \ref{lem: compatible}, $(Y')\supset (Y)=P$ is a small ideal. By the maximality of $P$ we have $(Y')=P$ and therefore $Y'=(Y')\cap X=P\cap X=Y$.
\end{proof}

\begin{rem}
 The converse of Proposition \ref{prop: main}(iii) also holds: if $Y\subset X$ is maximal compatible, the ideal $(Y)$ is a maximal small ideal in $R[X]$. Indeed, $(Y)$ is small by Lemma \ref{lem: compatible} and if $f\in R[X]\setminus (Y)$ then either $f$ is a constant and thus big, or $f$ has a monomial $m=x_1\cdots x_n$ with $x_i\not\in Y$ for $1\le i\le n$. Since $Y\cup\{x_i\}$ is not compatible for any $i$, one can pick $y_{ij}\in Y,\,1\le i\le n,\,1\le j\le n_i$ such that each $\{y_{i1},\ldots,y_{in_i},x_i\}$ has no upper bound in $X$. Therefore $f+\sum_{y\in \{y_{ij}:1\le i\le n,1\le j\le n_{ij}\}}y$ is big. In either case the ideal $(Y,f)\lhd R[X]$ is not small and therefore $(Y)$ is a maximal small ideal. Thus there is a bijection between maximal compatible subsets of $X$ and maximal small ideals of $R[X]$. This is a special case of \cite[Proposition on p. 126]{Ern95}.
\end{rem}

\section{Conclusion of the proof}
    
\begin{proof}[Proof of Proposition \ref{prop: central}] Let $X$ be a poset such that every compatible $Y\subset X$ has an upper bound. By the assumption of Proposition \ref{prop: central} and Lemma \ref{lem: max small} there exists a maximal small ideal $P\lhd R[X]$. By Proposition \ref{prop: main}(ii-iii) the set $Y=P\cap X\subset X$ is maximal compatible and by assumption $Y$ has an upper bound $x\in X$. We claim that $x$ is a maximal element of $X$. Otherwise $x<y$ for some $y\in Y$ and $Y\cup\{x,y\}\supsetneq Y$ is compatible (since $y$ is an upper bound on the entire set), contradicting the maximality of $Y$.
\end{proof}

{\bf Acknowledgment.} The author would like to thank Jorgen Harmse for his remarks on a previous version of this note, particularly for spotting a gap in the proof of Proposition \ref{prop: central}(i). The author was partially supported by Israel Science Foundation grant no. 2507/19.

\bibliography{mybib}
\bibliographystyle{alpha}

\end{document}